\newcommand{\subj}[1]{\par\noindent{\bf AMS Subject Classifications: }#1.}
\newcommand{\keyw}[1]{\par\noindent{\bf Keywords: }#1.}
\numberwithin{equation}{section}
\numberwithin{figure}{section}
\newtheorem{theorem}{Theorem}[section]
\newtheorem{corollary}[theorem]{Corollary}
\theoremstyle{definition}
\newtheorem{definition}[theorem]{Definition}
\newtheorem{example}[theorem]{Example}
\theoremstyle{remark}
\newtheorem{remark}[theorem]{Remark}
\date{}
\newcommand{\adsa}
{\vspace{-1in}\normalsize\flushleft
This is a preprint of a paper whose final and definite form will appear in \\
Advances in Dynamical Systems and Applications, ISSN 0973-5321\\
{\tt http://campus.mst.edu/adsa}\\\vspace{1mm}\hrule\vspace{5mm}
\renewcommand\thefootnote{{}}
\footnotetext{\noindent\tt Received February 11, 2017; Accepted June 22, 2017\\
\hspace*{8pt}Communicated by Delfim F. M. Torres}}
\begin{document}
\title{\adsa\center\Large\bf Time Scale Version of the Hermite--Hadamard Inequality for Functions Convex on the Coordinates }
\author{{\bf Eze R. Nwaeze}\\
        Tuskegee University\\
        Department of Mathematics\\
        Tuskegee, 36088, USA\\
        {\tt enwaeze@mytu.tuskegee.edu}
}
\maketitle
\thispagestyle{empty}

\begin{abstract}
In this paper, we present a time scale version of the Hermite-Hadamard inequality for functions convex on the coordinates via the diamond-$\alpha$ calculus. Our results are new and they generalize and extend a result due to Dragomir.
\end{abstract}
\subj{26D15, 54C30, 26D10}
\keyw{Hermite-Hadamard inequality, diamond-$\alpha$, convex function, time scales}

\section{Introduction}
A function $f:I\rightarrow\mathbb{R},~\emptyset\neq I\subseteq\mathbb{R},$ is said to be convex on the interval $I$ if the inequality $$f(\lambda x + (1-\lambda)y)\leq \lambda f(x)+(1-\lambda)f(y),$$
holds for all $x, y\in I$ and $\lambda\in [0, 1].$
A well celebrated inequality for the class of convex functions is the Hermite-Hadamard's inequality. The inequality states that for any convex function $f:[a, b]\rightarrow\mathbb{R}$ we have

\begin{equation}\label{Had}
f\Big(\frac{a+b}{2}\Big)\leq \frac{1}{b-a}\int_a^b f(x)dx\leq \frac{f(a)+f(b)}{2}.
\end{equation}

Using (\ref{Had}), Dragomir \cite{Drag} proved the following Hadamard's type result for functions, defined on a rectangle, that are convex on the coordinates.
\begin{theorem}\label{drag}
Let $f:[a, b]\times [c, d]\rightarrow\mathbb{R}$ be such that the partial mappings
\begin{equation*}
f_{y}:[a,b]\rightarrow \mathbb{R},~f_{y}(u):=f(u,y)~and~f_{x}:[c,d]
\rightarrow \mathbb{R},~f_{x}(v):=f(x,v)
\end{equation*}
defined for all $y\in [c,d]$ and $x\in [a,b],$ are convex. Then,
\begin{align*}
&f\Big(\frac{a+b}{2},\frac{c+d}{2}\Big )\\
&\leq\frac{1}{2}\Bigg[\frac{1}{b-a}\int_a^bf\Big(x,\frac{c+d}{2}\Big) dx+\frac{1}{d-c}\int_c^df\Big(\frac{a+b}{2},y\Big) dy \Bigg]\\
\nonumber
 &\leq \frac{1}{(b-a)(d-c)}\int_a^b\int_c^df(x, y) ~dx dy\\
\nonumber
&\leq \frac{1}{4(b-a)}\int_a^b\Big[f(x,c) +f(x,d)\Big] dx+\frac{1}{4(d-c)}\int_c^d\Big[f(a, y)+f(b, y)\Big] dy\\
&\leq \frac{f(a, c) + f(a, d) +f(b, c) + f(b, d)}{4}.
\end{align*}
The above inequalities are sharp.
\end{theorem}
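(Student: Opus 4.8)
The plan is to reduce every link in the chain to the one-dimensional inequality \eqref{Had}, applied alternately in each of the two variables and then averaged. Write the five quantities in the statement as $A\le B\le C\le D\le E$ from top to bottom. Each inequality will follow from a single invocation of \eqref{Had} on an appropriate convex partial mapping, in two of the four steps followed by an integration in the remaining variable and a symmetrizing average of the two analogous estimates. The hypotheses guarantee exactly the convexity needed: every partial map $x\mapsto f(x,y)$ and $y\mapsto f(x,y)$ (and hence every restriction to an edge) is convex, so \eqref{Had} applies throughout.

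For $A\le B$, I would apply the left half of \eqref{Had} to the convex maps $x\mapsto f(x,\frac{c+d}{2})$ and $y\mapsto f(\frac{a+b}{2},y)$, bounding the central value $f(\frac{a+b}{2},\frac{c+d}{2})$ above by each of the two one-dimensional means; adding and halving gives $A\le B$. For $B\le C$, apply the left half of \eqref{Had} to $y\mapsto f(x,y)$ for fixed $x$, then integrate in $x$ over $[a,b]$ and normalize, which bounds $\frac{1}{b-a}\int_a^b f(x,\frac{c+d}{2})\,dx$ above by $C$; the symmetric argument (fixing $y$, integrating in $x$) bounds the other half of $B$ by $C$, and averaging yields $B\le C$.

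The remaining two steps use the right half of \eqref{Had}. For $C\le D$, apply it to $y\mapsto f(x,y)$ and integrate in $x$, and separately to $x\mapsto f(x,y)$ and integrate in $y$; each produces a bound on $C$ by a single boundary integral carrying coefficient $\frac{1}{2}$, and averaging the two produces precisely the coefficient $\frac{1}{4}$ displayed in $D$. For $D\le E$, apply the right half of \eqref{Had} to each of the four edge functions $f(\cdot,c)$, $f(\cdot,d)$, $f(a,\cdot)$, $f(b,\cdot)$; substituting the four resulting corner-value estimates into $D$ and collecting terms, each corner $f(a,c)$, $f(a,d)$, $f(b,c)$, $f(b,d)$ appears exactly twice with weight $\frac{1}{8}$, summing to $E$.

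The computations are routine applications of \eqref{Had} together with Fubini's theorem; the only point demanding care is the bookkeeping of constants in the final two steps, where the factor $\frac{1}{4}$ (and the doubled appearance of each corner in $D\le E$) forces one to symmetrize over both coordinate directions rather than settle for a single edge bound on $C$. Sharpness is then immediate: for any affine $f(x,y)=\alpha x+\beta y+\gamma$ every use of \eqref{Had} holds with equality, so the whole chain collapses to a string of equalities, showing that none of the constants can be improved.
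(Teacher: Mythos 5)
Your proposal is correct and takes essentially the same route as the paper: the paper itself only cites Dragomir for Theorem~\ref{drag}, but it recovers that theorem (see the remark after Corollary~\ref{corLI1}) from its time-scale results, whose proofs are exactly your argument transplanted to time scales---apply the one-dimensional inequality to the convex partial mappings $f_y$ and $f_x$, integrate in the remaining variable, and average the two symmetric estimates, with your affine-equality observation supplying sharpness. The only blemish is a harmless slip in your $B\le C$ step, where ``integrating in $x$'' for the second half of $B$ should read ``integrating in $y$.''
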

In order to unify the theory of integral and differential calculus with the calculus of finite difference, Hilger \cite{Hilger} in 1988 introduced the concept of time scales (see Section \ref{sec:Prelim} for a brief overview). Since the advent of this notion, many classical mathematical inequalities have been extended to time scales. In this subject, the concept of the delta and nabla differentiation (integration) are introduced. In 2006, Sheng et al.\cite{Sheng} considered the linear combination of these already established derivatives (integrals) on time scales. This they called the diamond-$\alpha$ derivative (integrals) (see Definitions \ref{diamD} and \ref{diamI}). This new combined dynamic calculus has since generated a lot of interest among mathematicians, see for example, \cite{Ammi,Dinu,Tuna3} and the references therein.

In 2008, Dinu \cite{Dinu} extended (\ref{Had}) to time scales by proving the following result.

\begin{theorem}\label{lem1}
Let $\mathbb{T}$ be a time scale and $a, b\in \mathbb{T}.$ Let $f:[a, b]\cap\mathbb{T}\rightarrow\mathbb{R}$ be a continuous convex function. Then, $$f(t_{\alpha})\leq \frac{1}{b-a}\int_a^b f(x)\Diamond_{\alpha}x\leq \frac{b-t_{\alpha}}{b-a}f(a) + \frac{t_{\alpha}-a}{b-a}f(b),$$
where $t_\alpha:=\frac{1}{b-a}\int_a^b t \Diamond_{\alpha}t.$
\end{theorem}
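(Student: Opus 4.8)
The plan is to follow the classical proof of the Hermite--Hadamard inequality, replacing the Riemann integral throughout by the diamond-$\alpha$ integral and relying only on its linearity and order-preserving properties, together with the defining identity $\frac{1}{b-a}\int_a^b x\,\Diamond_\alpha x = t_\alpha$. As a preliminary observation I would record that $a\le t_\alpha\le b$: integrating the pointwise bounds $a\le x\le b$ over $[a,b]$ with respect to $\Diamond_\alpha$ and dividing by $b-a$ gives this immediately from monotonicity, so $t_\alpha$ is a genuine point of the interval at which $f$ is evaluated.

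For the left-hand inequality I would invoke the supporting-line property of convex functions. Since $f$ is convex and $t_\alpha\in[a,b]$, there is a slope $m\in\mathbb{R}$ with $f(x)\ge f(t_\alpha)+m(x-t_\alpha)$ for every $x\in[a,b]$. Applying the (linear, monotone) diamond-$\alpha$ integral and dividing by $b-a$ yields
\[
\frac{1}{b-a}\int_a^b f(x)\,\Diamond_\alpha x \ge f(t_\alpha)+m\left(\frac{1}{b-a}\int_a^b x\,\Diamond_\alpha x-t_\alpha\right),
\]
and the parenthesised term vanishes by the definition of $t_\alpha$, leaving precisely $f(t_\alpha)$.

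For the right-hand inequality I would write each $x\in[a,b]$ as the convex combination $x=\frac{b-x}{b-a}\,a+\frac{x-a}{b-a}\,b$, so that convexity gives $f(x)\le \frac{b-x}{b-a}f(a)+\frac{x-a}{b-a}f(b)$. Integrating and using linearity reduces everything to the two averages $\int_a^b(b-x)\,\Diamond_\alpha x$ and $\int_a^b(x-a)\,\Diamond_\alpha x$, which by linearity and the definition of $t_\alpha$ equal $(b-a)(b-t_\alpha)$ and $(b-a)(t_\alpha-a)$ respectively. Substituting these produces exactly the claimed right-hand bound $\frac{b-t_\alpha}{b-a}f(a)+\frac{t_\alpha-a}{b-a}f(b)$.

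The main obstacle is conceptual rather than computational. Unlike the Riemann integral, the diamond-$\alpha$ integral is a convex combination of the delta and nabla integrals, so I must confirm that the only structural facts used---linearity and monotonicity---are indeed available for $\Diamond_\alpha$; these are standard and presumably recorded with Definitions \ref{diamD} and \ref{diamI}. The genuinely delicate point is that $t_\alpha$, being an integral average, may fall in a gap of $\mathbb{T}$, so $f(t_\alpha)$ and its supporting line only make sense if $f$ is understood as convex on the whole real interval $[a,b]$; the continuity and convexity hypotheses must be read in this interval sense. Once that interpretation is fixed, both estimates follow directly, so I expect the verification of the two integral identities and of the supporting-line step to be the only places requiring care.
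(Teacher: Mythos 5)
First, a structural remark: the paper itself contains no proof of this statement. It is Dinu's theorem, quoted verbatim with the citation \cite{Dinu} and used as a black box to prove Theorem \ref{MR1}, so your argument can only be compared with the proof in the cited source and with the standard one-dimensional argument. Measured against those, your outline is essentially the right one and its computational core is correct: the right-hand inequality follows by integrating $f(x)\le\frac{b-x}{b-a}f(a)+\frac{x-a}{b-a}f(b)$ and invoking linearity and monotonicity of the diamond-$\alpha$ integral (items 1, 2 and 7 of Theorem \ref{use}, together with $\int_a^b c\,\Diamond_\alpha x=c(b-a)$ for constants), and your evaluations $\int_a^b(b-x)\,\Diamond_\alpha x=(b-a)(b-t_\alpha)$ and $\int_a^b(x-a)\,\Diamond_\alpha x=(b-a)(t_\alpha-a)$ are exactly right; the left-hand inequality via a support line at $t_\alpha$ is precisely how the diamond-$\alpha$ Jensen inequality of \cite{Ammi}, which Dinu's proof rests on, is itself established.

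The one genuine gap is the point you flagged but then resolved by fiat. As stated, $f$ is defined and convex only on $[a,b]\cap\mathbb{T}$, while $t_\alpha$ can fall in a gap of $\mathbb{T}$ (for $\mathbb{T}=\{0,1\}$, $a=0$, $b=1$ one gets $t_\alpha=1-\alpha$), so $f(t_\alpha)$ and your supporting line are a priori meaningless; declaring that the hypotheses ``must be read in the interval sense'' changes the theorem rather than proves it. The missing ingredient, and the way the statement is made rigorous in the source, is an extension lemma: a continuous function that is convex on $[a,b]\cap\mathbb{T}$ (i.e., $f(\lambda x+(1-\lambda)y)\le\lambda f(x)+(1-\lambda)f(y)$ whenever the combination lies in $\mathbb{T}$) extends, by linear interpolation across each gap, to a continuous convex function on the real interval $[a,b]$; the three-point slope inequality on $\mathbb{T}$-points is exactly time-scale convexity, so the interpolant is convex, and $f(t_\alpha)$ is read through this extension. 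With that lemma your two estimates go through verbatim. A second, minor, repair: a finite-slope support line exists at every \emph{interior} point of $[a,b]$, but may fail at an endpoint (a convex $f$ can have $f'_+(a)=-\infty$), so ``$t_\alpha\in[a,b]$, hence there is a slope $m$'' is not quite enough. This only matters in the degenerate case $t_\alpha\in\{a,b\}$, which forces $\sigma(a)=b$ together with $\alpha=1$ (if $t_\alpha=a$) or $\alpha=0$ (if $t_\alpha=b$), and there all three members of the inequality collapse to $f(a)$, respectively $f(b)$; it should be noted and dispatched separately.
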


The purpose of this paper is to extend Theorem \ref{drag} to time scales via the combined diamond-$\alpha$ dynamics. We do this by using Theorem \ref{lem1}. Some other interesting results are also obtained  in this direction.

The paper is organized as follows: in Section \ref{sec:Prelim}, we give a brief background of the theory of time scales. Thereafter, our results are formulated and proved in Section \ref{sec:MR}.


\section{Preliminaries}\label{sec:Prelim}
~~A {\it time scale} $\mathbb{T}$ is an arbitrary nonempty closed subset of $\mathbb{R}.$  We assume throughtout that a time scale $\mathbb{T}$ has the topology that it inherits from the real numbers with the standard topology. In what follows, we will give a brief overview of some basic notions connected to time scales and differentiability of functions on them.

The forward {\it jump operator} $\sigma:\mathbb{T}\rightarrow\mathbb{T}$ is defined by $$\sigma(t):=\inf\{s\in\mathbb{T}: s>t\},$$
while the backward {\it jump operator} $\rho:\mathbb{T}\rightarrow\mathbb{T}$ is defined by $$\rho(t):=\sup\{s\in\mathbb{T}: s<t\}.$$
In this definition, we put $\inf \emptyset=\sup\mathbb{T}$ (i.e., $\sigma(t)=t$ if $\mathbb{T}$ has a maximum $t$) and $\sup \emptyset=\inf\mathbb{T}$ (i.e., $\rho(t)=t$ if $\mathbb{T}$ has a minimum $t$), where $\emptyset$ denotes the empty set.
If $\sigma(t)>t,$ we say that $t$ is {\it right-scattered}, while if $\rho(t)<t$ we say that $t$ is {\it left-scattered}. Points that are right-scattered and left-scattered at the same time are called {\it isolated}. Also, if $t < \sup\mathbb{T}$ and $\sigma(t) = t,$ then $t$ is called right-dense, and if $t > \inf \mathbb{T}$ and $\rho(t) = t,$ then $t$ is called left-dense. Points that are right-dense
and left-dense at the same time are called {\it dense}. We also introduce the sets $\mathbb{T}^k$, $\mathbb{T}_k$, and $\mathbb{T}^k_k$, which are derived from the time scale $\mathbb{T}$ as follows:
if $\mathbb{T}$ has a left-scattered maximum $t_1$, then $\mathbb{T}^k=\mathbb{T}\setminus\{t_1\},$ otherwise $\mathbb{T}^k=\mathbb{T}.$  If $\mathbb{T}$ has a right-scattered minimum $t_2,$ then $\mathbb{T}_k=\mathbb{T}\setminus\{t_2\},$ otherwise $\mathbb{T}_k=\mathbb{T}.$  Finally, we define $\mathbb{T}^k_k=\mathbb{T}^k\cap\mathbb{T}_k.$

 For $a, b \in \mathbb{T}$ with $a\leq b,$ we define the interval $[a, b]$ in $\mathbb{T}$ by $[a, b]=\{t\in\mathbb{T}: a\leq t\leq b\}.$ Open intervals and half-open intervals are defined in the same manner.

\begin{definition}[Delta derivative]
Assume $f:\mathbb{T\rightarrow R}$ is a function.
Then the delta derivative $f^{\Delta }(t)\in \mathbb{R}$ at $t\in \mathbb{T%
}^{k}$ is defined to be number (provided it exists) with property that given
for any $\epsilon >0$ there exists a neighborhood $U$ of $t$ such that%
{\small
\begin{equation*}
\left\vert f(\sigma (t))-f(s)-f^{\Delta }(t)\left[ \sigma (t)-s\right]
\right\vert \leq \epsilon \left\vert \sigma (t)-s\right\vert ,\text{ \ \ \ }%
\forall s\in U.
\end{equation*}%
}
\end{definition}

\begin{definition}[Nabla derivative]
Assume $f:\mathbb{T\rightarrow R}$ is a function.
Then the nabla derivative $f^{\nabla }(t)\in \mathbb{R}$ at $t\in \mathbb{T%
}_{k}$ is defined to be number (provided it exists) with property that given
for any $\epsilon >0$ there exists a neighborhood $V$ of $t$ such that%
{\small
\begin{equation*}
\left\vert f(\rho (t))-f(s)-f^{\nabla }(t)\left[ \rho(t)-s\right]
\right\vert \leq \epsilon \left\vert \rho (t)-s\right\vert ,\text{ \ \ \ }%
\forall s\in V.
\end{equation*}%
}
\end{definition}

\begin{definition}[Delta integral]
A function $F:\mathbb{T}\rightarrow\mathbb{R}$ is said to be a delta antiderivative of  $f:\mathbb{T}\rightarrow\mathbb{R},$ provided $F^{\Delta}(t)=f(t)$ for all $t\in{\mathbb{T}}^k.$ For all $a, b\in\mathbb{T},~a<b,$ the delta integral of $f$ from $a$ to $b$ is defined by
$$\int_a^b f(t)\Delta t = F(b) - F(a).$$
\end{definition}

\begin{definition}[Nabla integral]
A function $G:\mathbb{T}\rightarrow\mathbb{R}$ is said to be a nabla antiderivative of  $g:\mathbb{T}\rightarrow\mathbb{R},$ provided $G^{\nabla}(t)=g(t)$ for all $t\in{\mathbb{T}}_k.$ For all $a, b\in\mathbb{T},~a<b,$ the nabla integral of $g$ from $a$ to $b$ is defined by
$$\int_a^b g(t)\nabla t = G(b) - G(a).$$
\end{definition}

For an indepth study of the properties of the $\Delta$ and $\nabla$ derivatives and integrals, we refer the interested reader to the books \cite{BookTS:2001,BookTS:2003}.\\

Now, we give a brief introduction of the diamond-$\alpha$ dynamic derivative and of the diamond-$\alpha$ integral.

\begin{definition}[\protect\cite{Rogers,Sheng}]
Let $t, s\in\mathbb{T}$ and define $\mu_{ts}:=\sigma(t)-s$ and $\eta_{ts}:=\rho(t)-s.$ We say that a function $f:\mathbb{T}\rightarrow\mathbb{R}$ is diamond-$\alpha$ differentiable at $t\in{\mathbb{T}}^k_k$ if there exists a number $f^{\Diamond_\alpha}(t)$ such that, for all $\epsilon>0,$ there exists a neighborhood $U$ of $t$ such that , for all $s\in U,$
$$\left|\alpha[f(\sigma(t))-f(s)]\eta_{ts} + (1-\alpha)[f(\rho(t))-f(s)]\mu_{ts} - f^{\Diamond_\alpha}(t)\mu_{ts}\eta_{ts}\right|\leq \epsilon|\mu_{ts}\eta_{ts}|.$$
A function $f$ is said to be diamond-$\alpha$ differentiable provided $f^{\Diamond_\alpha}(t)$ exists for all $t\in {\mathbb{T}}^k_k.$
\end{definition}

\begin{theorem}[\protect\cite{Rogers,Sheng}]\label{diamD}
 Let $\mathbb{T}$ be a time scale and $f$ be differentiable on $\mathbb{T}$ in the $\Delta$ and $\nabla$ senses at $t\in{\mathbb{T}}^k_k$ .
Then $f$ is  diamond-$\alpha$ differentiable at $t$ and
$$f^{\Diamond_{\alpha}}(t)=\alpha f^{\Delta}(t) + (1-\alpha) f^{\nabla}(t), ~~0\leq\alpha\leq 1.$$
Thus $f$ is diamond-$\alpha$ differentiable if and only if $f$ is $\Delta$ and $\nabla$ differentiable.
\end{theorem}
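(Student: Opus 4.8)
The plan is to check the defining inequality directly, using the explicit candidate
\[
L:=\alpha f^{\Delta}(t)+(1-\alpha)f^{\nabla}(t)
\]
in place of $f^{\Diamond_{\alpha}}(t)$, and then to recover the converse by reading one-sided limits off of that same inequality. Fix $t\in\mathbb{T}^k_k$ and $\epsilon>0$. Since $f$ is $\Delta$-differentiable at $t$, there is a neighborhood $U_1$ of $t$ on which $|f(\sigma(t))-f(s)-f^{\Delta}(t)\mu_{ts}|\leq\epsilon|\mu_{ts}|$; since $f$ is $\nabla$-differentiable at $t$, there is a neighborhood $V_1$ on which $|f(\rho(t))-f(s)-f^{\nabla}(t)\eta_{ts}|\leq\epsilon|\eta_{ts}|$.

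The decisive step is purely algebraic. Into the expression
\[
E:=\alpha[f(\sigma(t))-f(s)]\eta_{ts}+(1-\alpha)[f(\rho(t))-f(s)]\mu_{ts}-L\,\mu_{ts}\eta_{ts}
\]
I substitute $L\,\mu_{ts}\eta_{ts}=\alpha f^{\Delta}(t)\mu_{ts}\eta_{ts}+(1-\alpha)f^{\nabla}(t)\mu_{ts}\eta_{ts}$ and regroup, which splits $E$ exactly as
\[
E=\alpha\eta_{ts}\bigl[f(\sigma(t))-f(s)-f^{\Delta}(t)\mu_{ts}\bigr]+(1-\alpha)\mu_{ts}\bigl[f(\rho(t))-f(s)-f^{\nabla}(t)\eta_{ts}\bigr].
\]
On $U:=U_1\cap V_1$ the triangle inequality together with the two estimates above then yields $|E|\leq\alpha|\eta_{ts}|\,\epsilon|\mu_{ts}|+(1-\alpha)|\mu_{ts}|\,\epsilon|\eta_{ts}|=\epsilon|\mu_{ts}\eta_{ts}|$, since $\alpha+(1-\alpha)=1$. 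This is precisely the inequality in the definition, so $f$ is diamond-$\alpha$ differentiable at $t$ with $f^{\Diamond_{\alpha}}(t)=L$; the degenerate values $s=\sigma(t)$ and $s=\rho(t)$ need no separate treatment, because in the regrouped form one of the factors $\mu_{ts},\eta_{ts}$ then vanishes and makes $E=0$.

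For the converse I would specialize the defining inequality to $s\to t$. When $t$ is isolated, both $f^{\Delta}(t)$ and $f^{\nabla}(t)$ exist automatically, and when $t$ is dense one has $\mu_{ts}=\eta_{ts}=t-s$, so the inequality collapses (after dividing by $|t-s|$) to the ordinary difference quotient and returns both derivatives at once. The genuinely delicate configuration is a one-sided dense point---say $t$ left-dense and right-scattered---where letting $s\to t^{-}$ and dividing by the vanishing factor forces the surviving one-sided quotient, hence $f^{\nabla}(t)$, to converge. I expect this case analysis to be the main obstacle, and in particular it reveals that the ``only if'' direction genuinely requires $0<\alpha<1$: at $\alpha=1$ one has $\Diamond_{1}=\Delta$, so a function that is $\Delta$-differentiable but not $\nabla$-differentiable (readily built at a right-scattered left-dense point) is diamond-$\alpha$ differentiable without being $\nabla$-differentiable, so the equivalence must be read with the endpoints excluded.
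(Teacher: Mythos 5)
The paper offers no proof of this statement at all: it is imported verbatim from \cite{Rogers,Sheng} as a known preliminary, so your argument has to stand entirely on its own. Its first half does. The regrouping
\[
E=\alpha\eta_{ts}\bigl[f(\sigma(t))-f(s)-f^{\Delta}(t)\mu_{ts}\bigr]
+(1-\alpha)\mu_{ts}\bigl[f(\rho(t))-f(s)-f^{\nabla}(t)\eta_{ts}\bigr]
\]
is exactly correct, and on $U_1\cap V_1$ the triangle inequality (together with $\alpha\ge 0$, $1-\alpha\ge 0$ and $\alpha+(1-\alpha)=1$) gives $|E|\le\epsilon|\mu_{ts}\eta_{ts}|$, which is precisely the defining inequality with $f^{\Diamond_\alpha}(t)=L$. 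This is the standard proof of the displayed formula, and your remark that the values $s=\sigma(t)$, $s=\rho(t)$ cause no trouble is also right.

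The gap is in the converse, which you only sketch and whose hard step you misdescribe. At a left-dense, right-scattered point $t$, dividing the diamond-$\alpha$ inequality by $\mu_{ts}\eta_{ts}$ for $s<t$ gives
\[
\Bigl|\alpha\,\frac{f(\sigma(t))-f(s)}{\sigma(t)-s}
+(1-\alpha)\,\frac{f(t)-f(s)}{t-s}-f^{\Diamond_\alpha}(t)\Bigr|\le\epsilon,
\]
but this does \emph{not} yet force the quotient $\frac{f(t)-f(s)}{t-s}$ to converge: the first term still contains $f(s)$, so you must first prove that diamond-$\alpha$ differentiability implies $f(s)\to f(t)$ as $s\to t^{-}$, i.e.\ continuity of $f$ at $t$. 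That continuity statement has to be extracted from the same inequality (isolate the coefficient of $f(s)$, which tends to $(1-\alpha)(\sigma(t)-t)$, nonzero only because $\alpha\neq 1$), and it is also needed a second time: the existence of $f^{\Delta}(t)$ at a right-scattered point is not automatic, as you assert for isolated points, but at a right-scattered \emph{left-dense} point it requires continuity of $f$ at $t$ before one can set $f^{\Delta}(t)=\frac{f(\sigma(t))-f(t)}{\sigma(t)-t}$. Without this continuity lemma the case analysis you outline does not close. On the other hand, your final observation is correct and worth stating explicitly: the ``if and only if'' sentence is false at $\alpha\in\{0,1\}$ (diamond-$1$ differentiability is just $\Delta$-differentiability, which does not imply $\nabla$-differentiability at a left-dense right-scattered point), so the equivalence holds only for $0<\alpha<1$, which is how it is proved in \cite{Rogers}.
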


We may notice that the diamond-$\alpha$ derivative reduces to the standard $\Delta$ derivative as $\alpha=1,$
or the standard $\nabla$ derivative as $\alpha=0,$ while it represents a ``weighted dynamic derivative'' for
 $\alpha\in (0, 1).$  Furthermore, the combined dynamic derivative offers a centralized derivative
formula on any uniformly discrete time scale $\mathbb{T}$ when $\alpha=1/2.$  Needless to say, the latter
feature is particularly useful in many computational applications.

\begin{definition}[\protect\cite{Sheng}]\label{diamI}
Let $a, t\in\mathbb{T}$ and $f:\mathbb{T}\rightarrow\mathbb{R}.$ We define the $\Diamond_{\alpha}$ integral of $f$ as
$$\int_a^t f(s)\Diamond_{\alpha}s=\alpha\int_a^t f(s)\Delta s + (1-\alpha)\int_a^t f(s)\nabla s, ~~s\in \mathbb{T}, ~~0\leq\alpha\leq 1,$$
provided that there exist delta and nabla integrals of $f$ on $\mathbb{T}.$
\end{definition}
Next, we present some properties of the diamond-$\alpha$ integral that will come handy in the proof of our main results.

\begin{theorem}[\protect\cite{Ammi,Sheng}]\label{use}
Let $f$ and $g$ be two continuous functions on $[a, b],$ $a, b,t\in \mathbb{T},$ and $ c\in\mathbb{R}.$ Then
\begin{enumerate}
\item $\int_a^t [f(s) + g(s)]\Diamond_{\alpha}s =\int_a^t f(s)\Diamond_{\alpha}s + \int_a^t g(s)\Diamond_{\alpha}s,$
\item $\int_a^t cf(s)\Diamond_{\alpha}s = c\int_a^t f(s)\Diamond_{\alpha}s,$
\item $\int_a^t f(s)\Diamond_{\alpha}s=-\int_t^a f(s)\Diamond_{\alpha}s,$
\item $\int_a^t f(s)\Diamond_{\alpha}s=\int_a^b f(s)\Diamond_{\alpha}s+\int_b^t f(s)\Diamond_{\alpha}s,$
\item $\int_a^a f(s)\Diamond_{\alpha}s=0.$
\item If $f(t)\geq 0$ for all $t\in [a, b],$ then $\int_a^b f(t)\Diamond_{\alpha}t\geq 0.$
\item If $f(t)\leq g(t)$ for all $t\in [a, b],$ then $\int_a^b f(t)\Diamond_{\alpha}t\leq \int_a^b g(t)\Diamond_{\alpha}t.$
\end{enumerate}
\end{theorem}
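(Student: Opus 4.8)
The plan is to reduce every assertion to the defining identity of Definition \ref{diamI}, which expresses the $\Diamond_{\alpha}$ integral as the convex combination
$$\int_a^t f(s)\Diamond_{\alpha}s = \alpha\int_a^t f(s)\Delta s + (1-\alpha)\int_a^t f(s)\nabla s,$$
and then to invoke the corresponding, already established, properties of the $\Delta$- and $\nabla$-integrals found in \cite{BookTS:2001,BookTS:2003}. The continuity hypothesis on $f$ and $g$ over $[a,b]$ guarantees that both the $\Delta$- and $\nabla$-integrals appearing below exist, so every step is legitimate.

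First I would dispatch the algebraic identities (1)--(5). For (1) and (2) I would substitute the definition into the left-hand side, apply linearity of the $\Delta$- and $\nabla$-integrals termwise, and regroup the two resulting $\alpha$- and $(1-\alpha)$-weighted pieces to recover the right-hand side; the constant $c$ and the sum $f+g$ pass through both component integrals identically. Properties (3), (4) and (5) are handled the same way: the reversal rule $\int_a^t=-\int_t^a$, the interval-additivity $\int_a^t=\int_a^b+\int_b^t$, and the vanishing $\int_a^a=0$ each hold separately for the $\Delta$- and for the $\nabla$-integral, and taking the $\alpha$-weighted combination preserves them verbatim. No genuine obstacle arises here; the only point demanding care is bookkeeping the orientation conventions so that the reversal identity in (3) is applied consistently to both component integrals.

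The substantive restriction appears in the order properties (6) and (7), where the hypothesis $0\le\alpha\le1$ becomes essential. For (6), if $f(t)\ge 0$ throughout $[a,b]$ then the standard positivity of each component integral gives $\int_a^b f\,\Delta t\ge 0$ and $\int_a^b f\,\nabla t\ge 0$; since both weights $\alpha$ and $1-\alpha$ are nonnegative, the combination is a sum of nonnegative terms and hence $\int_a^b f\,\Diamond_{\alpha}t\ge 0$. Property (7) then follows immediately by applying (6) to the continuous function $g-f\ge 0$ and expanding with the linearity already obtained in (1) and (2). The step I expect to be most delicate is precisely this reliance on nonnegativity of the weights: the argument breaks if $\alpha$ is allowed outside $[0,1]$, so I would emphasize that the monotonicity claims genuinely use the convexity of the combination, in contrast to the purely formal identities (1)--(5), which hold for any real $\alpha$.
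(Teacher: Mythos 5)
Your proposal is correct, and it is worth noting that the paper itself gives no proof of Theorem \ref{use}: it is quoted as a known result from \cite{Ammi,Sheng}. Your argument—expanding the $\Diamond_{\alpha}$ integral via Definition \ref{diamI} into its $\alpha$-weighted $\Delta$ and $\nabla$ components, transferring linearity, reversal, additivity and vanishing termwise, and then using nonnegativity of the weights $\alpha$ and $1-\alpha$ together with positivity of the component integrals for items (6) and (7)—is precisely the standard proof given in those references, and your remark that only the order properties genuinely require $0\leq\alpha\leq 1$ is accurate.
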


The two-variable time scales calculus and multiple integration on time
scales were introduced in \cite{4,5} (see also \cite{6}). Let $\mathbb{T}%
_{1} $ and $\mathbb{T}_{2}$ be two time scales and put~{\small $\mathbb{T}%
_{1}\times \mathbb{T}_{2}=\left\{ \left( t,s\right) :t\in \mathbb{T}_{1},%
\text{ }s\in \mathbb{T}_{2}\right\} $}~which is a complete metric space with
the metric $d$ defined by{\small
\begin{equation*}
d\left( \left( t,s\right) ,\left( t^{\prime },s^{\prime }\right) \right) =%
\sqrt{\left( t-t^{\prime }\right) ^{2}+\left( s-s^{\prime }\right) ^{2}},%
\text{ \ \ }\forall \left( t,s\right) ,\left( t^{\prime },s^{\prime }\right)
\in \mathbb{T}_{1}\times \mathbb{T}_{2}.
\end{equation*}%
}

To make this paper self content, we now recall some basic definitions of the diamond-$\alpha$ partial dynamic calculus on time scales \cite{Ozkan}.\\

Suppose $a<b$ are points in $%
\mathbb{T}_{1},$ $c<d$ are points in $\mathbb{T}_{2},$ $[a,b]$ is the
closed bounded interval in $\mathbb{T}_{1}$, $[c,d]$ is the closed
bounded interval in $\mathbb{T}_{2}.$ We introduce a ``rectangle" in $%
\mathbb{T}_{1}\times $ $\mathbb{T}_{2}$ by$$R=[a, b]\times[c, d]=\{(t, s): t\in [a, b], ~s\in [c, d]\}.$$
Let $\{t_0,t_1,\ldots,t_n\}\subset[a,b]$, where $a=t_0<t_1<\cdots<t_n=b$ and $\{s_0,s_1,\ldots,s_k\}\subset[c,d],$ where $c=s_0<s_1<\cdots<s_k=d.$ The numbers $n$ and $k$ may be arbitrary positive integers. We call the collection of intervals $$P_1=\{[t_{i-1}, t_i):1\leq i\leq n\}$$ a $\Diamond_{\alpha}$-partition of $[a, b)$ and denote the set of all $\Diamond_{\alpha}$-partition of $[a, b)$ by $\mathcal{P}([a, b))$. Similarly, the collection of intervals $$P_2=\{[s_{j-1}, s_j):1\leq j\leq k\}$$ a $\Diamond_{\alpha}$-partition of $[c, d)$ and denote the set of all $\Diamond_{\alpha}$-partition of $[c, d)$ by $\mathcal{P}([c, d))$. Let $$R_{ij}=[t_{i-1}, t_i)\times [s_{j-1}, s_j), ~~{\rm where}~ 1\leq i\leq n, ~ 1\leq j\leq k.$$
We call the collection $P=\{R_{ij}: 1\leq i\leq n, ~1\leq j\leq k\}$ a $\Diamond_{\alpha}$-partition of $R,$ generated by the $\Diamond_{\alpha}$-partitions of $P_1$ and $P_2$ of $[a, b)$ and $[c, d)$ respectively, and write $P=P_1\times P_2$. The rectangles $P_{ij}, ~1\leq i\leq n, ~1\leq j\leq k$, are called the subrectangles of the partition $P$. The set of all $\Diamond_{\alpha}$-partitions of $R$ is denoted by $\mathcal{P}(R).$

Let $f: R\rightarrow\mathbb{R}$ be a bounded function. We set
$$M=\sup\{f(t, s): (t, s)\in R\}~~{\rm and} ~~m=\inf\{f(t, s): (t, s)\in R\}$$
and for $1\leq i\leq n,~1\leq j\leq k$, set $$M_{ij}=\sup\{f(t, s): (t, s)\in R_{ij}\}~~{\rm and} ~~m_{ij}=\inf\{f(t, s): (t, s)\in R_{ij}\}.$$
The upper Darboux $\Diamond_\alpha$-integral $U(f)$ of $f$ over $R$ and the lower Darboux $\Diamond_\alpha$-integral $L(f)$ of $f$ over $R$ are defined respectively by
$$U(f)=\inf\{U(f, P): P\in \mathcal{P}(R)\}$$
and
$$L(f)=\sup\{L(f, P): P\in \mathcal{P}(R)\},$$where
$$U(f, P)=\sum_{i=1}^n\sum_{j=1}^k M_{ij}(t_i - t_{i-1})(s_j - s_{j-1})$$
and
$$L(f, P)=\sum_{i=1}^n\sum_{j=1}^k m_{ij}(t_i - t_{i-1})(s_j - s_{j-1}).$$

\begin{definition}[\protect\cite{Ozkan}]
We say that $f$ is $\Diamond_\alpha$-integrable over $R$ provided $L(f)=U(f).$ In this case, we write $\int_R f(t, s)\Diamond_\alpha t\Diamond_\alpha s$ for this common value. We call this integral the Darboux $\Diamond_\alpha$-integral.
\end{definition}

\begin{theorem}[\protect\cite{Ozkan}]
If $L(f, P)=U(f, P)$ for some $\Diamond_\alpha$-partition of $R,$ then the function $f$ is $\Diamond_\alpha$-integrable over $R$ and $$\int_R f(t, s)\Diamond_\alpha t\Diamond_\alpha s=L(f, P)=U(f, P).$$
\end{theorem}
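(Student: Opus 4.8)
The plan is to squeeze the lower and upper Darboux $\Diamond_\alpha$-integrals $L(f)$ and $U(f)$ together using the distinguished partition $P$, the crucial auxiliary fact being the general inequality $L(f)\le U(f)$. I would begin by recording the elementary estimate $L(f,Q)\le U(f,Q)$ valid for every $Q\in\mathcal{P}(R)$: on each subrectangle one has $m_{ij}\le M_{ij}$ directly from the definitions of infimum and supremum, and since every factor $(t_i-t_{i-1})(s_j-s_{j-1})$ is nonnegative, the inequality survives termwise summation. This compares the two sums attached to a \emph{single} partition, but says nothing across different partitions.

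The heart of the argument, and the step demanding the most care, is to obtain comparisons between distinct partitions, namely $L(f,P_1)\le U(f,P_2)$ for arbitrary $P_1,P_2\in\mathcal{P}(R)$. For this I would form the common refinement $P_1\vee P_2$, whose $t$-nodes are the union of the $t$-nodes of $P_1$ and $P_2$, and likewise for its $s$-nodes, and then establish two monotonicity facts: refining a partition cannot decrease its lower sum and cannot increase its upper sum. The mechanism is local. Subdividing a subrectangle $R_{ij}$ into pieces $R_{ij}^{(p)}$ can only raise the infimum, $m_{ij}\le\inf_{R_{ij}^{(p)}}f$, and only lower the supremum, $\sup_{R_{ij}^{(p)}}f\le M_{ij}$, since these extrema are now taken over smaller sets, while the areas of the pieces sum to the area of $R_{ij}$. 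The two-dimensional bookkeeping of how the superimposed grid lines cut each subrectangle is where the work lies, though it is entirely combinatorial and introduces no idea absent from the one-variable case.

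Chaining the two monotonicity facts through the common refinement, together with the same-partition estimate, gives
$$L(f,P_1)\le L(f,P_1\vee P_2)\le U(f,P_1\vee P_2)\le U(f,P_2).$$
Taking the supremum over $P_1$ and then the infimum over $P_2$ yields $L(f)\le U(f)$.

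Finally I would specialize to the partition $P$ of the hypothesis. Because $P\in\mathcal{P}(R)$, the definitions of $L(f)$ as a supremum and $U(f)$ as an infimum give $L(f,P)\le L(f)$ and $U(f)\le U(f,P)$, so that
$$L(f,P)\le L(f)\le U(f)\le U(f,P).$$
Under the assumption $L(f,P)=U(f,P)$ the two extremes coincide, forcing equality throughout. In particular $L(f)=U(f)$, so $f$ is $\Diamond_\alpha$-integrable over $R$ by definition, and the common value $\int_R f(t,s)\,\Diamond_\alpha t\,\Diamond_\alpha s$ equals $L(f,P)=U(f,P)$, as claimed.
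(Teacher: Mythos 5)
Your proof is correct. Note that the paper itself offers no argument for this statement: it is quoted verbatim from the cited reference of \"Ozkan and Kaymak\c{c}alan, so there is no in-paper proof to compare against. What you have written is the standard Darboux chain
$$L(f,P)\le L(f)\le U(f)\le U(f,P),$$
obtained from the single-partition estimate, the refinement monotonicity of lower and upper sums, and the common-refinement comparison $L(f,P_1)\le U(f,P_2)$; under the hypothesis $L(f,P)=U(f,P)$ this collapses to equality, giving integrability and the stated value of the integral. The argument transfers to the time-scale setting without change because, with the paper's definitions, the Darboux sums use only the ordinary areas $(t_i-t_{i-1})(s_j-s_{j-1})$, and the one point worth making explicit is that the common refinement $P_1\vee P_2$ is again a legitimate $\Diamond_\alpha$-partition: its nodes are unions of nodes of $P_1$ and $P_2$, hence still lie in $\mathbb{T}_1$ and $\mathbb{T}_2$ respectively. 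With that observation, and the standing assumption that $f$ is bounded (so all $m_{ij}$, $M_{ij}$ are finite), your proof is complete.
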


\section{Main Results}\label{sec:MR}
Let $t_\alpha:=\frac{1}{b-a}\int_a^b t \Diamond_{\alpha}t $ and $s_\alpha:=\frac{1}{d-c}\int_c^d s \Diamond_{\alpha}s. $
We now state and prove our first result.
\begin{theorem}\label{MR1}
Let $a,b,x\in {\mathbb{T}}_{1},~c,d,y\in {\mathbb{T}}_{2},$ with $a<b,~c<d$ and $f:[a,b]\times [c,d]\rightarrow \mathbb{R}$ be such that the partial mappings
\begin{equation*}
f_{y}:[a,b]\rightarrow \mathbb{R},~f_{y}(u):=f(u,y)~and~f_{x}:[c,d]
\rightarrow \mathbb{R},~f_{x}(v):=f(x,v)
\end{equation*}
defined for all $y\in [c,d]$ and $x\in [a,b],$ are continuous and convex. Then the following inequalities hold
\begin{align}\label{MRinq1}
\nonumber
&\frac{1}{2}\Bigg[\frac{1}{b-a}\int_a^bf(x, s_{\alpha})\Diamond_{\alpha}x+\frac{1}{d-c}\int_c^df(t_{\alpha},y)\Diamond_{\alpha}y \Bigg]\\
\nonumber
 &\leq \frac{1}{(b-a)(d-c)}\int_a^b\int_c^df(x, y)\Diamond_{\alpha}x\Diamond_{\alpha}y\\
\nonumber
&\leq \frac{1}{2(b-a)(d-c)}\int_a^b\Big[(d-s_{\alpha})f(x,c) +(s_{\alpha}-c)f(x,d)\Big] \Diamond_{\alpha}x\\
&+\frac{1}{2(b-a)(d-c)}\int_c^d\Big[(b-t_{\alpha})f(a, y)+(t_{\alpha}-a)f(b, y)\Big]\Diamond_{\alpha}y.
\end{align}
\end{theorem}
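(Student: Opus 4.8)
The plan is to reduce the two-dimensional statement to the one-dimensional result of Dinu (Theorem \ref{lem1}), applied separately to each family of partial mappings, and then to integrate the resulting chains in the remaining variable using the linearity and monotonicity of the diamond-$\alpha$ integral collected in Theorem \ref{use}.

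First I would fix $x\in[a,b]$ and apply Theorem \ref{lem1} to the continuous convex partial mapping $f_x(v)=f(x,v)$ on $[c,d]$; since the associated barycenter of that interval is $s_\alpha$, this gives
$$f(x,s_\alpha)\leq\frac{1}{d-c}\int_c^d f(x,y)\,\Diamond_{\alpha}y\leq\frac{d-s_\alpha}{d-c}f(x,c)+\frac{s_\alpha-c}{d-c}f(x,d).$$
Symmetrically, fixing $y\in[c,d]$ and applying Theorem \ref{lem1} to $f_y(u)=f(u,y)$ on $[a,b]$ yields
$$f(t_\alpha,y)\leq\frac{1}{b-a}\int_a^b f(x,y)\,\Diamond_{\alpha}x\leq\frac{b-t_\alpha}{b-a}f(a,y)+\frac{t_\alpha-a}{b-a}f(b,y).$$

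Next I would integrate each chain in its free variable. Integrating the first chain in $x$ over $[a,b]$ (monotonicity, item~7 of Theorem \ref{use}, together with linearity) and dividing by $b-a$ produces the lower estimate
$$\frac{1}{b-a}\int_a^b f(x,s_\alpha)\,\Diamond_{\alpha}x\leq\frac{1}{(b-a)(d-c)}\int_a^b\!\int_c^d f(x,y)\,\Diamond_{\alpha}y\,\Diamond_{\alpha}x$$
together with the companion upper bound by $\frac{1}{(b-a)(d-c)}\int_a^b\big[(d-s_\alpha)f(x,c)+(s_\alpha-c)f(x,d)\big]\Diamond_{\alpha}x$. An analogous integration of the second chain in $y$ over $[c,d]$ gives the corresponding pair of estimates with the roles of the two variables interchanged. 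Adding the two lower estimates and dividing by $2$ reproduces the first member of (\ref{MRinq1}), while adding the two upper estimates and dividing by $2$ reproduces its last member.

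The one genuine obstacle is the identification of the two iterated integrals $\int_a^b\int_c^d f\,\Diamond_{\alpha}y\,\Diamond_{\alpha}x$ and $\int_c^d\int_a^b f\,\Diamond_{\alpha}x\,\Diamond_{\alpha}y$ with the single two-dimensional integral $\int_a^b\int_c^d f(x,y)\,\Diamond_{\alpha}x\,\Diamond_{\alpha}y$ over $R=[a,b]\times[c,d]$ that sits in the middle of (\ref{MRinq1}). This requires a Fubini-type theorem for the multiple Darboux diamond-$\alpha$ integral, permitting the order of integration to be reversed and the iterated integrals to be replaced by the integral over $R$. Granting this interchange, both the averaged lower bound and the averaged upper bound collapse onto the central double integral, and the full chain (\ref{MRinq1}) follows after the obvious cancellations.
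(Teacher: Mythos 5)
Your proposal is correct and follows essentially the same route as the paper: apply Dinu's one-dimensional result (Theorem \ref{lem1}) to each family of partial mappings, integrate each resulting chain in the remaining variable via items 1 and 7 of Theorem \ref{use}, divide, and add the two chains. The only difference is that you explicitly flag the Fubini-type identification of the two iterated integrals with the double integral over $R$ as a needed ingredient, a point the paper passes over silently by writing both orders of integration with the same symbol $\int_a^b\int_c^d f(x,y)\,\Diamond_{\alpha}x\,\Diamond_{\alpha}y$.
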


\begin{proof}
Applying Theorem \ref{lem1} to the function $f_y$, we obtain
\begin{align*}
f_y(t_{\alpha})\leq \frac{1}{b-a}\int_a^b f_y(x)\Diamond_{\alpha}x\leq \frac{b-t_{\alpha}}{b-a}f_y(a) + \frac{t_{\alpha}-a}{b-a}f_y(b),
\end{align*}
for all $y\in [c, d].$ That is,
\begin{align}\label{equ1}
f(t_{\alpha},y)\leq \frac{1}{b-a}\int_a^b f(x, y)\Diamond_{\alpha}x\leq \frac{b-t_{\alpha}}{b-a}f(a, y) + \frac{t_{\alpha}-a}{b-a}f(b, y).
\end{align}
Taking the diamond-$\alpha$ integral of both sides of (\ref{equ1}) over the interval $[c, d]$, applying items 1 and 7 of Theorem \ref{use}, and dividing by $d-c,$ we get
\begin{align}\label{equ2}
\nonumber
\frac{1}{d-c}\int_c^d&f(t_{\alpha},y)\Diamond_{\alpha}y\leq \frac{1}{(b-a)(d-c)}\int_a^b\int_c^df(x, y)\Diamond_{\alpha}x\Diamond_{\alpha}y\\
&\leq \frac{b-t_{\alpha}}{(b-a)(d-c)}\int_c^df(a, y)\Diamond_{\alpha}y + \frac{t_{\alpha}-a}{(b-a)(d-c)}\int_c^df(b, y)\Diamond_{\alpha}y.
\end{align}
Similarly, applying Theorem \ref{lem1} to $f_x,$ we get that for all $x\in[a, b],$

\begin{align}\label{equ22}
f(x, s_{\alpha})\leq \frac{1}{d-c}\int_c^d f(x, y)\Diamond_{\alpha}y\leq \frac{d-s_{\alpha}}{d-c}f(x, c) + \frac{s_{\alpha}-c}{d-c}f(x, d).
\end{align}
This implies that
\begin{align}\label{equ3}
\nonumber
\frac{1}{b-a}\int_a^b&f(x, s_{\alpha})\Diamond_{\alpha}x\leq \frac{1}{(b-a)(d-c)}\int_a^b\int_c^df(x, y)\Diamond_{\alpha}x\Diamond_{\alpha}y\\
&\leq \frac{d-s_{\alpha}}{(b-a)(d-c)}\int_a^bf(x,c)\Diamond_{\alpha}x + \frac{s_{\alpha}-c}{(b-a)(d-c)}\int_a^bf(x,d)\Diamond_{\alpha}x.
\end{align}
Adding relations (\ref{equ2}) and (\ref{equ3}), we obtain
\begin{align*}
&\frac{1}{d-c}\int_c^df(t_{\alpha},y)\Diamond_{\alpha}y +\frac{1}{b-a}\int_a^bf(x, s_{\alpha})\Diamond_{\alpha}x\\
 &\leq \frac{2}{(b-a)(d-c)}\int_a^b\int_c^df(x, y)\Diamond_{\alpha}x\Diamond_{\alpha}y\\
&\leq \frac{b-t_{\alpha}}{(b-a)(d-c)}\int_c^df(a, y)\Diamond_{\alpha}y + \frac{t_{\alpha}-a}{(b-a)(d-c)}\int_c^df(b, y)\Diamond_{\alpha}y\\
&+\frac{d-s_{\alpha}}{(b-a)(d-c)}\int_a^bf(x,c)\Diamond_{\alpha}x + \frac{s_{\alpha}-c}{(b-a)(d-c)}\int_a^bf(x,d)\Diamond_{\alpha}x.
\end{align*}
Hence (\ref{MRinq1}) follows.
\end{proof}

\begin{corollary}\label{cor11}
For $\alpha=0,$ the inequalities in Theorem \ref{MR1} become
\begin{align}
\nonumber
&\frac{1}{2}\Bigg[\frac{1}{b-a}\int_a^bf(x, s_0)\nabla x+\frac{1}{d-c}\int_c^df(t_0,y)\nabla y \Bigg]\\
\nonumber
 &\leq \frac{1}{(b-a)(d-c)}\int_a^b\int_c^df(x, y)\nabla x\nabla y\\
\nonumber
&\leq \frac{1}{2(b-a)(d-c)}\int_a^b\Big[(d-s_0)f(x,c) +(s_0-c)f(x,d)\Big] \nabla x\\
&+\frac{1}{2(b-a)(d-c)}\int_c^d\Big[(b-t_0)f(a, y)+(t_0-a)f(b, y)\Big]\nabla y,
\end{align}
where $s_0=\frac{1}{d-c}\int_c^d s\nabla s$ and $t_0=\frac{1}{b-a}\int_a^b t\nabla t.$
\end{corollary}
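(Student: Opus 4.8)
The plan is to specialize Theorem \ref{MR1} by setting $\alpha=0$; the entire content of the corollary is this single substitution, so no new analytic work is required. The key observation is Definition \ref{diamI}: for any continuous $g$,
$$\int_a^t g(s)\Diamond_{0}s = 0\cdot\int_a^t g(s)\Delta s + 1\cdot\int_a^t g(s)\nabla s = \int_a^t g(s)\nabla s,$$
so every single-variable diamond-$\alpha$ integral appearing in Theorem \ref{MR1} collapses to the corresponding nabla integral when $\alpha=0$, since the delta part carries the coefficient $\alpha=0$ while the nabla part carries $1-\alpha=1$.

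First I would apply this to the auxiliary points defined just before Theorem \ref{MR1}, obtaining $t_0=\frac{1}{b-a}\int_a^b t\,\nabla t$ and $s_0=\frac{1}{d-c}\int_c^d s\,\nabla s$, exactly as stated in the corollary. Next I would note that the iterated diamond-$\alpha$ integral in the middle term of (\ref{MRinq1}) is, by the construction in the proof of Theorem \ref{MR1} (where one integrates in $y$ over $[c,d]$ and then in $x$ over $[a,b]$), built from nested single-variable integrals; hence setting $\alpha=0$ turns each $\Diamond_0 x$ and $\Diamond_0 y$ into $\nabla x$ and $\nabla y$ throughout, giving $\int_a^b\int_c^d f(x,y)\,\nabla x\,\nabla y$. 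Substituting these specializations into every term of the inequality chain (\ref{MRinq1}) yields the claimed four-term nabla inequality.

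There is no genuine obstacle: the statement is a direct corollary obtained by evaluating the parameter $\alpha$ at the boundary value $0$ in the already-proved Theorem \ref{MR1}. The only point worth a moment's care is to confirm that the convex combination in Definition \ref{diamI} reduces cleanly at $\alpha=0$ for both the single and the iterated integrals, which it does for the reason noted above, so the coefficients $(d-s_\alpha),(s_\alpha-c),(b-t_\alpha),(t_\alpha-a)$ simply become $(d-s_0),(s_0-c),(b-t_0),(t_0-a)$ and the proof is complete.
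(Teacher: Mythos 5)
Your proposal is correct and matches the paper's treatment exactly: the paper states Corollary \ref{cor11} without proof precisely because, as you observe, setting $\alpha=0$ in Definition \ref{diamI} collapses every diamond-$\alpha$ integral to the corresponding nabla integral, so the corollary is an immediate substitution into Theorem \ref{MR1}. Your extra care about the iterated integral and the points $t_0$, $s_0$ is sound but adds nothing beyond what the paper takes as self-evident.
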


\begin{corollary}\label{cor12}
For $\alpha=1/2,$ the inequalities in Theorem \ref{MR1} become
\begin{align}
\nonumber
&\frac{1}{2}\Bigg[\frac{1}{b-a}\int_a^bf\Big(x, \frac{c+d}{2}\Big)\Diamond_{1/2}x+\frac{1}{d-c}\int_c^df\Big(\frac{a+b}{2},y\Big)\Diamond_{1/2}y \Bigg]\\
\nonumber
 &\leq \frac{1}{(b-a)(d-c)}\int_a^b\int_c^df(x, y)\Diamond_{1/2}x\Diamond_{1/2}y\\
\nonumber
&\leq \frac{1}{4(b-a)}\int_a^b\Big[f(x,c) +f(x,d)\Big] \Diamond_{1/2}x\\
&+\frac{1}{4(d-c)}\int_c^d\Big[f(a, y)+f(b, y)\Big]\Diamond_{1/2}y.
\end{align}
\end{corollary}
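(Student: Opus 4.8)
The plan is to derive Corollary \ref{cor12} entirely by specializing Theorem \ref{MR1} to $\alpha=1/2$; the only genuine computation is to show that the abscissae $t_\alpha$ and $s_\alpha$ collapse to the ordinary midpoints at $\alpha=1/2$, after which the asserted inequality drops out by elementary simplification of the coefficients in (\ref{MRinq1}). Concretely, I would first prove the identity
\begin{equation*}
t_{1/2}=\frac{a+b}{2},\qquad s_{1/2}=\frac{c+d}{2},
\end{equation*}
and then substitute.

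To establish the midpoint identity it suffices, by the definition $t_{1/2}=\frac{1}{b-a}\int_a^b t\,\Diamond_{1/2}t$, to show that $\int_a^b t\,\Diamond_{1/2}t=\frac{b^2-a^2}{2}$. Writing the diamond integral as $\frac{1}{2}\int_a^b t\,\Delta t+\frac{1}{2}\int_a^b t\,\nabla t$, I would evaluate each one-sided integral using the standard monomial identities $(t^2)^{\Delta}=t+\sigma(t)$ and $(t^2)^{\nabla}=t+\rho(t)$. Integrating these and splitting $\sigma(t)=t+(\sigma(t)-t)$ and $\rho(t)=t-(t-\rho(t))$ yields
\begin{equation*}
\int_a^b t\,\Delta t=\frac{b^2-a^2}{2}-\frac{1}{2}\int_a^b\big(\sigma(t)-t\big)\Delta t,\qquad \int_a^b t\,\nabla t=\frac{b^2-a^2}{2}+\frac{1}{2}\int_a^b\big(t-\rho(t)\big)\nabla t.
\end{equation*}

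The crux of the argument---the one step needing a real idea rather than bookkeeping---is the cancellation of the two scattered-point corrections, namely
\begin{equation*}
\int_a^b\big(\sigma(t)-t\big)\Delta t=\int_a^b\big(t-\rho(t)\big)\nabla t.
\end{equation*}
I expect both sides to reduce to $\sum(\text{gap length})^2$ summed over the gaps of $[a,b]\cap\mathbb{T}_1$: the left integral picks up, at each right-scattered point $\rho(\tau)$, the contribution $(\sigma(\rho(\tau))-\rho(\tau))^2=(\tau-\rho(\tau))^2$, while the right integral picks up, at the companion left-scattered point $\tau$, the contribution $(\tau-\rho(\tau))^2$; since each gap is bounded by exactly one such pair, the two sums agree termwise. (This is the only place one must be careful, so it is the main obstacle; alternatively it could be quoted from the time-scale literature on $\Diamond_{1/2}$-integration.) Averaging the two displayed formulas with weight $\frac{1}{2}$ then annihilates the corrections and gives $\int_a^b t\,\Diamond_{1/2}t=\frac{b^2-a^2}{2}$, whence $t_{1/2}=\frac{a+b}{2}$; the computation for $s_{1/2}$ over $\mathbb{T}_2$ is identical.

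Finally I would insert these values into (\ref{MRinq1}). On the left, $f(x,s_{1/2})=f(x,\frac{c+d}{2})$ and $f(t_{1/2},y)=f(\frac{a+b}{2},y)$, matching the first line of the corollary. On the right, the four coefficients simplify to $d-s_{1/2}=s_{1/2}-c=\frac{d-c}{2}$ and $b-t_{1/2}=t_{1/2}-a=\frac{b-a}{2}$; factoring the common $\frac{d-c}{2}$ (respectively $\frac{b-a}{2}$) out of the first (respectively second) integral and absorbing it into the prefactor $\frac{1}{2(b-a)(d-c)}$ turns that prefactor into $\frac{1}{4(b-a)}$ (respectively $\frac{1}{4(d-c)}$). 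This reproduces the stated inequality exactly, completing the proof; every step after the midpoint identity is routine substitution.
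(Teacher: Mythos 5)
Your proposal is correct, and its overall skeleton coincides with the paper's: both proofs reduce the corollary to the single identity $\int_a^b t\,\Diamond_{1/2}t=\frac{b^2-a^2}{2}$ (hence $t_{1/2}=\frac{a+b}{2}$, $s_{1/2}=\frac{c+d}{2}$) and then substitute into (\ref{MRinq1}), where the coefficients $\frac{d-s_{1/2}}{d-c}=\frac{s_{1/2}-c}{d-c}=\frac{1}{2}$ and $\frac{b-t_{1/2}}{b-a}=\frac{t_{1/2}-a}{b-a}=\frac{1}{2}$ produce the factors $\frac{1}{4(b-a)}$ and $\frac{1}{4(d-c)}$. The difference is in how that identity is obtained: the paper simply quotes it as equation (3.31) of Dinu's article, whereas you prove it from scratch via the monomial formulas $(t^2)^{\Delta}=t+\sigma(t)$, $(t^2)^{\nabla}=t+\rho(t)$ and the observation that the two correction terms $\int_a^b(\sigma(t)-t)\Delta t$ and $\int_a^b(t-\rho(t))\nabla t$ both equal the sum of squared gap lengths of $[a,b]\cap\mathbb{T}_1$, pairing each right-scattered gap endpoint with its left-scattered companion. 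Your gap-cancellation argument is sound (the delta integral of the graininess $\mu(t)=\sigma(t)-t$ is supported on the countably many right-scattered points, each contributing $\mu(t)^2$, and symmetrically for the nabla side), and it buys a self-contained proof that also explains \emph{why} $\alpha=1/2$ is the special weight at which the forward and backward corrections annihilate; the paper's citation buys brevity at the cost of opacity. One small caveat, inherited from the paper rather than introduced by you: for the expressions $f\big(x,\frac{c+d}{2}\big)$ and $f\big(\frac{a+b}{2},y\big)$ to be defined one needs $\frac{a+b}{2}\in\mathbb{T}_1$ and $\frac{c+d}{2}\in\mathbb{T}_2$, an assumption the paper makes explicit only in Theorem \ref{MR2}.
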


\begin{proof}
From (3.31) in \cite{Dinu}, we have that
\begin{equation}\label{u}
\int_a^b t \Diamond_{1/2}t=\frac{b^2-a^2}{2}.
\end{equation}
 The desired inequality follows by applying (\ref{u}) to the definitions of $t_{\alpha}$ and $s_{\alpha}$ when $\alpha=1/2.$
\end{proof}

\begin{corollary}\label{cor13}
For $\alpha=1,$ the inequalities in Theorem \ref{MR1} become
\begin{align}
\nonumber
&\frac{1}{2}\Bigg[\frac{1}{b-a}\int_a^bf(x, s_1)\Delta x+\frac{1}{d-c}\int_c^df(t_1,y)\Delta y \Bigg]\\
\nonumber
 &\leq \frac{1}{(b-a)(d-c)}\int_a^b\int_c^df(x, y)\Delta x\Delta y\\
\nonumber
&\leq \frac{1}{2(b-a)(d-c)}\int_a^b\Big[(d-s_1)f(x,c) +(s_1-c)f(x,d)\Big] \Delta x\\
&+\frac{1}{2(b-a)(d-c)}\int_c^d\Big[(b-t_1)f(a, y)+(t_1-a)f(b, y)\Big]\Delta y,
\end{align}
where $s_1=\frac{1}{d-c}\int_c^d s\Delta s$ and $t_1=\frac{1}{b-a}\int_a^b t\Delta t.$
\end{corollary}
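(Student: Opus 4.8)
The plan is to obtain Corollary \ref{cor13} as the direct specialization of Theorem \ref{MR1} to the endpoint value $\alpha = 1$. The only fact required is that, by Definition \ref{diamI}, setting $\alpha = 1$ collapses the diamond-$\alpha$ integral onto the delta integral,
$$\int_a^t f(s)\Diamond_1 s = \int_a^t f(s)\Delta s,$$
because the nabla term carries the coefficient $1 - \alpha = 0$. No differentiation properties enter; the whole argument takes place at the level of this integral identity.

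First I would substitute $\alpha = 1$ throughout the chain of inequalities \eqref{MRinq1}. Every $\Diamond_\alpha x$ and $\Diamond_\alpha y$ becomes $\Delta x$ and $\Delta y$, and each single and double integral passes verbatim to its delta counterpart. Next I would record that the averaged parameters specialize in the same way: $t_\alpha = \frac{1}{b-a}\int_a^b t\,\Diamond_\alpha t$ becomes $t_1 = \frac{1}{b-a}\int_a^b t\,\Delta t$, and likewise $s_\alpha$ becomes $s_1 = \frac{1}{d-c}\int_c^d s\,\Delta s$. Feeding these into the coefficients $d - s_\alpha$, $s_\alpha - c$, $b - t_\alpha$, $t_\alpha - a$ that appear in \eqref{MRinq1} produces exactly the displayed inequalities of the corollary.

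I expect no real obstacle here; this is bookkeeping rather than mathematics. The one point worth flagging, by contrast with the proof of Corollary \ref{cor12}, is that for $\alpha = 1$ no further simplification of the weights is available. In the $\alpha = 1/2$ case the explicit evaluation $\int_a^b t\,\Diamond_{1/2} t = \frac{b^2 - a^2}{2}$ forces $t_{1/2} = \frac{a+b}{2}$ and thereby collapses the four boundary coefficients to $\frac{1}{2}$ each; on a general time scale, however, the delta-average $t_1$ need not equal $\frac{a+b}{2}$. Consequently $d - s_1$, $s_1 - c$, $b - t_1$, $t_1 - a$ remain as they stand, and the corollary correctly retains the full weighted form inherited from Theorem \ref{MR1}.
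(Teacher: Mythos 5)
Your proposal is correct and matches the paper's (implicit) argument: the paper states Corollary \ref{cor13} without proof precisely because it is the immediate specialization of Theorem \ref{MR1} at $\alpha=1$, where Definition \ref{diamI} reduces $\Diamond_{1}$ to $\Delta$ and $t_\alpha$, $s_\alpha$ become $t_1$, $s_1$. Your remark that, unlike the $\alpha=1/2$ case, the weights admit no further simplification on a general time scale is also accurate and consistent with why the paper keeps the weighted form here.
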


\begin{theorem}\label{MR2}
Under the assumption of Theorem \ref{MR1}, and suppose also the intervals contain the mid points, then we have the following inequality
\begin{align}\label{MRineq2}
\nonumber
f\Big(\frac{a+b}{2}, s_{\alpha}\Big)+f\Big(t_{\alpha},\frac{c+d}{2}\Big)\leq \frac{1}{b-a}\int_a^b &f\Big(x, \frac{c+d}{2}\Big)\Diamond_{\alpha}x\\
&+ \frac{1}{d-c}\int_c^d f\Big(\frac{a+b}{2}, y\Big)\Diamond_{\alpha}y.
\end{align}
\end{theorem}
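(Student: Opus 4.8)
The plan is to obtain both summands on the left-hand side of (\ref{MRineq2}) from the \emph{first} (lower) inequality in Dinu's Theorem \ref{lem1}, applied to two one-dimensional slices of $f$ taken through the midpoints of the two intervals. The additional hypothesis that the intervals contain their midpoints is precisely what guarantees that $\frac{a+b}{2}\in[a,b]\cap\mathbb{T}_1$ and $\frac{c+d}{2}\in[c,d]\cap\mathbb{T}_2$; this ensures that the two slices are genuine functions on the respective time-scale intervals and that they inherit continuity and convexity directly from the hypotheses on the partial mappings $f_y$ and $f_x$.

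First I would fix the second variable at $y=\frac{c+d}{2}$ and consider the partial map $f_{(c+d)/2}:[a,b]\rightarrow\mathbb{R}$ given by $f_{(c+d)/2}(x)=f\big(x,\frac{c+d}{2}\big)$, which is continuous and convex by assumption. Applying the lower bound of Theorem \ref{lem1} to this function on $[a,b]$, and recalling that $t_\alpha=\frac{1}{b-a}\int_a^b t\,\Diamond_\alpha t$ is exactly the evaluation point appearing there, I obtain
\begin{equation*}
f\Big(t_\alpha,\frac{c+d}{2}\Big)\leq\frac{1}{b-a}\int_a^b f\Big(x,\frac{c+d}{2}\Big)\,\Diamond_\alpha x.
\end{equation*}

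Symmetrically, fixing the first variable at $x=\frac{a+b}{2}$ and applying the lower bound of Theorem \ref{lem1} to the continuous convex partial map $f_{(a+b)/2}:[c,d]\rightarrow\mathbb{R}$, $f_{(a+b)/2}(y)=f\big(\frac{a+b}{2},y\big)$, on $[c,d]$, with $s_\alpha=\frac{1}{d-c}\int_c^d s\,\Diamond_\alpha s$, I obtain
\begin{equation*}
f\Big(\frac{a+b}{2},s_\alpha\Big)\leq\frac{1}{d-c}\int_c^d f\Big(\frac{a+b}{2},y\Big)\,\Diamond_\alpha y.
\end{equation*}
Adding these two inequalities yields exactly (\ref{MRineq2}).

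Unlike the proof of Theorem \ref{MR1}, here there is no need to integrate a full inequality over a second variable, and the upper (right-hand) bound of Dinu's result is never used; only the first inequality is invoked, and only at the midpoints. For this reason I do not anticipate any genuine computational obstacle. The single point that requires care is the well-definedness noted above: one must check that the midpoints actually belong to the respective time scales, which is exactly why the extra hypothesis has been imposed, so that the convexity of the partial mappings may legitimately be transferred to the two slices.
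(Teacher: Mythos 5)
Your proposal is correct and follows essentially the same route as the paper: the paper also applies only the lower bound of Dinu's Theorem \ref{lem1}, specializing its inequalities (\ref{equ1}) and (\ref{equ22}) at $y=\frac{c+d}{2}$ and $x=\frac{a+b}{2}$ (which is exactly your direct application to the midpoint slices), and then adds the two resulting inequalities. Your remark about the midpoint hypothesis guaranteeing well-definedness of the slices matches the role this assumption plays in the paper's argument.
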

\begin{proof}
Since the inequalities in (\ref{equ1}) hold for all $y\in[c, d]$, and by using the assumption of our theorem,  we therefore have that for $y=\frac{c+d}{2}$, the following inequality holds
\begin{align}\label{equ4}
f\Big(t_{\alpha},\frac{c+d}{2}\Big)\leq \frac{1}{b-a}\int_a^b f\Big(x, \frac{c+d}{2}\Big)\Diamond_{\alpha}x.
\end{align}
Using a similar argument, we get from (\ref{equ22})
\begin{align}\label{equ5}
f\Big(\frac{a+b}{2}, s_{\alpha}\Big)\leq \frac{1}{d-c}\int_c^d f\Big(\frac{a+b}{2}, y\Big)\Diamond_{\alpha}y.
\end{align}
Inequality (\ref{MRineq2}) follows by adding (\ref{equ4}) and (\ref{equ5}).
\end{proof}

\begin{corollary}\label{cor21}
If we take $\alpha=0,$ then the inequality in Theorem \ref{MR2} reduces to
\begin{align}\label{ineCor21}
\nonumber
f\Big(\frac{a+b}{2}, s_0\Big)+f\Big(t_0,\frac{c+d}{2}\Big)\leq \frac{1}{b-a}\int_a^b &f\Big(x, \frac{c+d}{2}\Big)\nabla x\\
&+ \frac{1}{d-c}\int_c^d f\Big(\frac{a+b}{2}, y\Big)\nabla y,
\end{align}
where $s_0=\frac{1}{d-c}\int_c^d s\nabla s$ and $t_0=\frac{1}{b-a}\int_a^b t\nabla t.$
\end{corollary}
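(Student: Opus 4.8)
The plan is to obtain Corollary \ref{cor21} as a direct specialization of Theorem \ref{MR2} to the parameter value $\alpha=0$, so the entire argument hinges on understanding how the diamond-$\alpha$ objects collapse at this endpoint. First I would invoke Definition \ref{diamI}: for any continuous integrand the diamond-$\alpha$ integral is $\int_a^t f(s)\Diamond_\alpha s=\alpha\int_a^t f(s)\Delta s+(1-\alpha)\int_a^t f(s)\nabla s$, and setting $\alpha=0$ kills the delta-weighted summand, leaving precisely the nabla integral $\int_a^t f(s)\nabla s$. This single observation is what turns every $\Diamond_\alpha$ appearing in inequality (\ref{MRineq2}) into a $\nabla$.

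Next I would track what happens to the averaged abscissae. By definition $t_\alpha=\frac{1}{b-a}\int_a^b t\,\Diamond_\alpha t$ and $s_\alpha=\frac{1}{d-c}\int_c^d s\,\Diamond_\alpha s$; applying the $\alpha=0$ reduction to these two integrals shows that $t_\alpha$ becomes $t_0=\frac{1}{b-a}\int_a^b t\,\nabla t$ and $s_\alpha$ becomes $s_0=\frac{1}{d-c}\int_c^d s\,\nabla s$, which are exactly the quantities named in the statement of the corollary. Substituting these identifications, together with the integral reduction, into the conclusion (\ref{MRineq2}) of Theorem \ref{MR2} yields (\ref{ineCor21}) verbatim.

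The one point I would make sure to state, rather than an obstacle, is that the substitution is legitimate because the hypotheses of Theorem \ref{MR2} are entirely $\alpha$-independent: the partial mappings $f_x$ and $f_y$ are assumed continuous and convex, and the intervals are assumed to contain their midpoints $\frac{a+b}{2}$ and $\frac{c+d}{2}$, none of which references $\alpha$. Consequently Theorem \ref{MR2} applies for every admissible $\alpha\in[0,1]$, in particular for $\alpha=0$, and the corollary follows with no additional estimation. I expect no genuine difficulty here; the result is a clean limiting case, and the only care required is the bookkeeping that $t_0,s_0$ are the correct $\alpha=0$ reductions of $t_\alpha,s_\alpha$.
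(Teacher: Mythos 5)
Your proposal is correct and matches what the paper does: Corollary \ref{cor21} is stated without proof precisely because it is the immediate specialization $\alpha=0$ of Theorem \ref{MR2}, where Definition \ref{diamI} turns every $\Diamond_{\alpha}$ integral into a $\nabla$ integral and $t_\alpha,\,s_\alpha$ into $t_0,\,s_0$. Your additional remark that the hypotheses of Theorem \ref{MR2} are $\alpha$-independent is sound bookkeeping, but no new idea beyond the paper's (implicit) argument is involved.
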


\begin{corollary}\label{cor22}
For $\alpha=1/2,$ Theorem \ref{MR2} becomes
\begin{align}\label{ineCor22}
\nonumber
f\Big(\frac{a+b}{2},\frac{c+d}{2}\Big)\leq \frac{1}{2(b-a)}\int_a^b &f\Big(x, \frac{c+d}{2}\Big)\Diamond_{1/2} x\\
&+ \frac{1}{2(d-c)}\int_c^d f\Big(\frac{a+b}{2}, y\Big)\Diamond_{1/2} y.
\end{align}
\end{corollary}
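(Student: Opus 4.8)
The plan is to derive this corollary as a direct specialization of Theorem \ref{MR2} at $\alpha = 1/2$, so the only real work is to identify what $t_{\alpha}$ and $s_{\alpha}$ become in this case. The governing observation is that the diamond-$\frac12$ integral of the identity function collapses to the ordinary arithmetic mean of the endpoints; everything else is substitution.

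First I would invoke the identity (3.31) from Dinu \cite{Dinu}, already quoted as (\ref{u}) in the proof of Corollary \ref{cor12}, namely $\int_a^b t\,\Diamond_{1/2}t = \frac{b^2-a^2}{2}$. Applying this to the definition $t_{\alpha} := \frac{1}{b-a}\int_a^b t\,\Diamond_{\alpha}t$ at $\alpha = 1/2$ gives
\begin{equation*}
t_{1/2} = \frac{1}{b-a}\cdot\frac{b^2-a^2}{2} = \frac{a+b}{2},
\end{equation*}
and by the identical computation on $[c,d]$ one obtains $s_{1/2} = \frac{c+d}{2}$. This is the heart of the argument, and it is exactly the step that makes the midpoint $\frac{a+b}{2}$ appear naturally; there is no genuine obstacle here, only the bookkeeping of confirming the weighted measure places its ``center of mass'' at the midpoint.

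Next I would substitute $\alpha = 1/2$, together with these two evaluations, into inequality (\ref{MRineq2}) of Theorem \ref{MR2}. The left-hand side reads $f\big(\frac{a+b}{2}, s_{1/2}\big) + f\big(t_{1/2}, \frac{c+d}{2}\big)$, and since both evaluated arguments coincide with the midpoints, both terms equal $f\big(\frac{a+b}{2}, \frac{c+d}{2}\big)$; hence the left side becomes $2f\big(\frac{a+b}{2}, \frac{c+d}{2}\big)$. The right-hand side is simply the $\alpha = 1/2$ form of the integral expressions. Dividing the resulting inequality through by $2$ yields precisely (\ref{ineCor22}), which completes the proof. The one point I would be careful to note is that the hypothesis of Theorem \ref{MR2} that the intervals contain their midpoints is what legitimizes evaluating the partial maps at $\frac{a+b}{2}$ and $\frac{c+d}{2}$, so that $t_{1/2}, s_{1/2}$ lie in the relevant time-scale intervals.
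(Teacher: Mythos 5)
Your proposal is correct and matches the paper's approach: the paper leaves Corollary \ref{cor22} without explicit proof precisely because the only nontrivial step is the one it already carried out for Corollary \ref{cor12}, namely invoking Dinu's identity (\ref{u}) to get $t_{1/2}=\frac{a+b}{2}$ and $s_{1/2}=\frac{c+d}{2}$. Your substitution into (\ref{MRineq2}), collapsing the left side to $2f\big(\frac{a+b}{2},\frac{c+d}{2}\big)$ and dividing by $2$, is exactly the intended argument.
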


\begin{corollary}\label{cor23}
Setting $\alpha=1$ in Theorem \ref{MR2}, we get
\begin{align}\label{ineCor23}
\nonumber
f\Big(\frac{a+b}{2}, s_1\Big)+f\Big(t_1,\frac{c+d}{2}\Big)\leq \frac{1}{b-a}\int_a^b &f\Big(x, \frac{c+d}{2}\Big)\Delta x\\
&+ \frac{1}{d-c}\int_c^d f\Big(\frac{a+b}{2}, y\Big)\Delta y,
\end{align}
where $s_1=\frac{1}{d-c}\int_c^d s\Delta s$ and $t_1=\frac{1}{b-a}\int_a^b t\Delta t.$
\end{corollary}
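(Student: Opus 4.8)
The plan is to obtain this corollary as a direct specialization of Theorem \ref{MR2} at the parameter value $\alpha=1$; no new analysis is required beyond the defining property of the diamond-$\alpha$ integral.

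First I would invoke Definition \ref{diamI}, which expresses the diamond-$\alpha$ integral as the convex combination
\begin{equation*}
\int_a^t f(s)\Diamond_{\alpha}s=\alpha\int_a^t f(s)\Delta s + (1-\alpha)\int_a^t f(s)\nabla s.
\end{equation*}
Setting $\alpha=1$ annihilates the nabla term, so $\int_a^t f(s)\Diamond_{1}s=\int_a^t f(s)\Delta s$. Consequently every occurrence of $\Diamond_{\alpha}$ appearing in inequality (\ref{MRineq2}) is replaced by $\Delta$ once we take $\alpha=1$.

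Next I would apply this same reduction to the auxiliary quantities $t_{\alpha}=\frac{1}{b-a}\int_a^b t\,\Diamond_{\alpha}t$ and $s_{\alpha}=\frac{1}{d-c}\int_c^d s\,\Diamond_{\alpha}s$. At $\alpha=1$ these become $t_1=\frac{1}{b-a}\int_a^b t\,\Delta t$ and $s_1=\frac{1}{d-c}\int_c^d s\,\Delta s$, which are exactly the quantities named in the statement of the corollary. Substituting $\alpha=1$ throughout (\ref{MRineq2}) then yields (\ref{ineCor23}) verbatim. The hypotheses of Theorem \ref{MR2}---continuity and convexity of the partial mappings $f_x$ and $f_y$, inherited from Theorem \ref{MR1}, together with the requirement that the intervals contain their midpoints---do not involve $\alpha$ and so are preserved under the specialization.

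The main obstacle is essentially nonexistent: the entire content of the proof is the observation that $\Diamond_{1}=\Delta$ at the level of both the double integral and the single integrals defining $t_1$ and $s_1$. The only point worth a moment's attention is that the midpoints $\frac{a+b}{2}\in{\mathbb{T}}_1$ and $\frac{c+d}{2}\in{\mathbb{T}}_2$ must belong to the underlying time scales for the left-hand side of (\ref{ineCor23}) to be defined, but this is precisely the additional hypothesis already imposed in Theorem \ref{MR2} and thus carried over without further comment.
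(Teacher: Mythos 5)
Your proposal is correct and coincides with the paper's (implicit) justification: the paper states this corollary without a separate proof precisely because, by Definition \ref{diamI}, the diamond-$\alpha$ integral collapses to the delta integral when $\alpha=1$, so every $\Diamond_{\alpha}$ in (\ref{MRineq2}), including those defining $t_{\alpha}$ and $s_{\alpha}$, becomes $\Delta$. Your additional remark that the hypotheses (including the midpoint condition) are independent of $\alpha$ is accurate and completes the specialization.
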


\begin{theorem}\label{MR3}
Under the assumption of Theorem \ref{MR1}, we have the following inequality
\begin{align}\label{LI}
\nonumber
&\frac{1}{b-a}\int_a^b \big[f(x, c)+ f(x, d)\big]\Diamond_{\alpha}x + \frac{1}{d-c}\int_c^d \big[f(a, y)+f(b,y)\big]\Diamond_{\alpha}y\\
&\leq A_1f(a, c) + A_2f(a, d) + A_3f(b, c) + A_4f(b, d),
\end{align}
where $A_1=\frac{b-t_{\alpha}}{b-a}+\frac{d-s_{\alpha}}{d-c},$ $A_2=\frac{b-t_{\alpha}}{b-a}+\frac{s_{\alpha}-c}{d-c},$  $A_3=\frac{t_{\alpha}-a}{b-a}+\frac{d-s_{\alpha}}{d-c},$  and $A_4=\frac{t_{\alpha}-a}{b-a}+\frac{s_{\alpha}-c}{d-c}.$
\end{theorem}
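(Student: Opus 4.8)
The plan is to read off, from the upper (right-hand) bounds already established in (\ref{equ1}) and (\ref{equ22}), the four endpoint inequalities obtained by specializing the free variable to the endpoints of the opposite interval, and then to add these four inequalities. Since (\ref{equ1}) holds for every $y\in[c,d]$ and (\ref{equ22}) holds for every $x\in[a,b]$, no fresh appeal to Theorem~\ref{lem1} is even required: the whole argument is a single finite linear combination of inequalities we already have in hand, so no convexity manipulation beyond what was used in Theorem~\ref{MR1} is needed.

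Concretely, I would first substitute $y=c$ and then $y=d$ into the second inequality of (\ref{equ1}), obtaining
$$\frac{1}{b-a}\int_a^b f(x,c)\Diamond_{\alpha}x\leq \frac{b-t_{\alpha}}{b-a}f(a,c)+\frac{t_{\alpha}-a}{b-a}f(b,c)$$
together with the entirely analogous bound for $\frac{1}{b-a}\int_a^b f(x,d)\Diamond_{\alpha}x$. Next I would substitute $x=a$ and then $x=b$ into the second inequality of (\ref{equ22}), which yields
$$\frac{1}{d-c}\int_c^d f(a,y)\Diamond_{\alpha}y\leq \frac{d-s_{\alpha}}{d-c}f(a,c)+\frac{s_{\alpha}-c}{d-c}f(a,d)$$
together with its companion bound for $\frac{1}{d-c}\int_c^d f(b,y)\Diamond_{\alpha}y$. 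Each of these four steps is merely a substitution into an already proven inequality, so each is valid at once.

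Adding the two $x$-integral bounds to the two $y$-integral bounds, the left-hand side collapses precisely to $\frac{1}{b-a}\int_a^b[f(x,c)+f(x,d)]\Diamond_{\alpha}x+\frac{1}{d-c}\int_c^d[f(a,y)+f(b,y)]\Diamond_{\alpha}y$, which is exactly the left member of (\ref{LI}). On the right-hand side, each of the four corner values $f(a,c)$, $f(a,d)$, $f(b,c)$, $f(b,d)$ receives one weight from the $x$-integral bounds (a multiple of $\frac{b-t_{\alpha}}{b-a}$ or $\frac{t_{\alpha}-a}{b-a}$) and one weight from the $y$-integral bounds (a multiple of $\frac{d-s_{\alpha}}{d-c}$ or $\frac{s_{\alpha}-c}{d-c}$).

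The only point requiring any care, and hence the sole (though routine) obstacle, is the coefficient bookkeeping that regroups these weights into exactly $A_1,\dots,A_4$. For instance $f(a,c)$ picks up $\frac{b-t_{\alpha}}{b-a}$ from the $y=c$ bound and $\frac{d-s_{\alpha}}{d-c}$ from the $x=a$ bound, which sum to $A_1$; the remaining three corners are checked in the same way and match $A_2$, $A_3$, and $A_4$ respectively. Assembling these coefficients gives (\ref{LI}) and completes the proof.
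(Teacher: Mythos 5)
Your proposal is correct and is essentially identical to the paper's own proof: the paper likewise specializes the right-hand inequality of (\ref{equ1}) at $y=c$ and $y=d$ and that of (\ref{equ22}) at $x=a$ and $x=b$, then adds the four resulting inequalities, with the coefficients regrouping into $A_1,\dots,A_4$ exactly as you describe. Your added remark that these endpoint substitutions are automatically legitimate (since $a,b,c,d$ lie in the respective time scales, unlike the midpoint case of Theorem \ref{MR2}) is a sound observation the paper leaves implicit.
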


\begin{proof}
From (\ref{equ1}) and (\ref{equ22}), we get the following inequalities
\begin{align*}
\frac{1}{b-a}\int_a^b f(x, c)\Diamond_{\alpha}x\leq \frac{b-t_{\alpha}}{b-a}f(a, c) + \frac{t_{\alpha}-a}{b-a}f(b, c),
\end{align*}
\begin{align*}
\frac{1}{b-a}\int_a^b f(x, d)\Diamond_{\alpha}x\leq \frac{b-t_{\alpha}}{b-a}f(a, d) + \frac{t_{\alpha}-a}{b-a}f(b, d),
\end{align*}
\begin{align*}
\frac{1}{d-c}\int_c^d f(a, y)\Diamond_{\alpha}y\leq \frac{d-s_{\alpha}}{d-c}f(a, c) + \frac{s_{\alpha}-c}{d-c}f(a, d),
\end{align*}
and
\begin{align*}
\frac{1}{d-c}\int_c^d f(b, y)\Diamond_{\alpha}y\leq \frac{d-s_{\alpha}}{d-c}f(b, c) + \frac{s_{\alpha}-c}{d-c}f(b, d),
\end{align*}
which give, by addition, (\ref{LI}).

\end{proof}

\begin{corollary}\label{corLI1}
For $\alpha=1,$ we get from Theorem \ref{MR3} the following inequality
\begin{align}
\nonumber
&\frac{1}{b-a}\int_a^b \big[f(x, c)+ f(x, d)\big]\Delta x + \frac{1}{d-c}\int_c^d \big[f(a, y)+f(b,y)\big]\Delta y\\
&\leq A_1f(a, c) + A_2f(a, d) + A_3f(b, c) + A_4f(b, d),
\end{align}
where $A_1=\frac{b-t_1}{b-a}+\frac{d-s_1}{d-c},$ $A_2=\frac{b-t_1}{b-a}+\frac{s_1-c}{d-c},$  $A_3=\frac{t_1-a}{b-a}+\frac{d-s_1}{d-c},$  and $A_4=\frac{t_1-a}{b-a}+\frac{s_1-c}{d-c}.$
\end{corollary}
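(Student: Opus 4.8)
The plan is to obtain this corollary as a direct specialization of Theorem \ref{MR3}, since it is exactly that theorem evaluated at the parameter value $\alpha=1$. First I would note that all the structural hypotheses---that $f$ is defined on the rectangle $[a,b]\times[c,d]$ and that the partial mappings $f_y$ and $f_x$ are continuous and convex---are inherited verbatim from Theorem \ref{MR3} (and ultimately from Theorem \ref{MR1}), so nothing in the setup needs to be reverified; only the dynamics-dependent symbols change.

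The key step is to invoke Definition \ref{diamI}: setting $\alpha=1$ in $\int_a^t f(s)\Diamond_{\alpha}s=\alpha\int_a^t f(s)\Delta s+(1-\alpha)\int_a^t f(s)\nabla s$ collapses every diamond-$\alpha$ integral appearing in \eqref{LI} to the corresponding delta integral. Consequently each $\Diamond_{\alpha}x$ and $\Diamond_{\alpha}y$ in the left-hand side of \eqref{LI} becomes $\Delta x$ and $\Delta y$. In parallel, the averages defining the abscissae specialize: $t_{\alpha}=\frac{1}{b-a}\int_a^b t\Diamond_{\alpha}t$ becomes $t_1=\frac{1}{b-a}\int_a^b t\Delta t$, and likewise $s_{\alpha}$ becomes $s_1=\frac{1}{d-c}\int_c^d s\Delta s$, matching the quantities named in the corollary.

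Finally I would substitute these specialized values into the four coefficients. Since $A_1=\frac{b-t_{\alpha}}{b-a}+\frac{d-s_{\alpha}}{d-c}$ and its companions depend on $\alpha$ only through $t_{\alpha}$ and $s_{\alpha}$, replacing $t_{\alpha}\to t_1$ and $s_{\alpha}\to s_1$ yields precisely $A_1=\frac{b-t_1}{b-a}+\frac{d-s_1}{d-c}$, $A_2=\frac{b-t_1}{b-a}+\frac{s_1-c}{d-c}$, $A_3=\frac{t_1-a}{b-a}+\frac{d-s_1}{d-c}$, and $A_4=\frac{t_1-a}{b-a}+\frac{s_1-c}{d-c}$. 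Reassembling the inequality with these substitutions gives the stated result. There is essentially no obstacle here: the content is a routine evaluation at $\alpha=1$, and the only point requiring any care is the bookkeeping that confirms the $\Diamond_{1}$ integrals reduce to $\Delta$ integrals consistently throughout, both in the integrals on the left and in the definitions of $t_1$ and $s_1$ that feed the coefficients.
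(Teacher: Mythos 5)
Your proposal is correct and coincides with the paper's treatment: the paper states Corollary \ref{corLI1} without any proof precisely because it is the immediate evaluation of Theorem \ref{MR3} at $\alpha=1$, with Definition \ref{diamI} collapsing every $\Diamond_{\alpha}$ integral to a $\Delta$ integral and $t_{\alpha}, s_{\alpha}$ specializing to $t_1, s_1$ exactly as you describe.
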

\begin{remark}
If we take ${\mathbb{T}}_{1}={\mathbb{T}}_{2}=\mathbb{R}$ in Corollaries \ref{cor13}, \ref{cor23} and \ref{corLI1}, and combine the resultant inequalities, we get Theorem \ref{drag} due to Dragomir \cite{Drag}.
\end{remark}

As an application of the above corollary, we consider the following example.

\begin{example}
Let ${\mathbb{T}}_{1}={\mathbb{T}}_{2}=\mathbb{Z}$, and $f: [0, 2]\times[1, 3]\rightarrow\mathbb{R}$ be convex and continuous on the coordinates. Then the following inequality holds
$$f(0, 2)+f(1, 1)+f(1, 3)+ f(2, 2)\leq f(0, 1)+ f(0, 3)+ f(2, 1)+ f(2, 3).$$
\end{example}
\noindent{Justification}:\\
The left hand side of Corollary \ref{corLI1} gives
\begin{align}\label{e1}
\nonumber
&\frac{1}{2}\int_0^2 \big[f(x, 1)+ f(x, 3)\big]\Delta x + \frac{1}{2}\int_1^3 \big[f(0, y)+f(2,y)\big]\Delta y\\
\nonumber
&= \frac{1}{2}\sum_{x=0}^{1}\big[f(x, 1)+ f(x, 3)\big] + \frac{1}{2}\sum_{y=1}^{2}\big[f(0, y)+f(2,y)\big]\\
&= f(0, 1)+ \frac{1}{2}\Big[f(0,2)+ f(0, 3)+ f(1, 1)+ f(1, 3) + f(2, 1) + f(2, 2)\Big].
\end{align}
By a simple computation we get $t_1=\frac{1}{2}$, $s_1=\frac{3}{2}$, $$A_1=\frac{3}{2}, ~A_2=1=A_3, ~{\rm and}~A_4=\frac{1}{2}.$$
Hence, the right hand side of  Corollary \ref{corLI1}  amounts to
\begin{equation}\label{e2}
\frac{3}{2}f(0, 1)+ f(0, 3)+ f(2, 1)+ \frac{1}{2}f(2, 3).
\end{equation}
We obtain the desired inequality by combining (\ref{e1}) and (\ref{e2}) in the spirit of Corollary \ref{corLI1}.

\section{Conclusion}
A time scale version of the Hermite--Hadamard inequality for functions convex on the coordinates has been proved. By taking ${\mathbb{T}}_{1}={\mathbb{T}}_{2}=\mathbb{R}$ in Corollaries \ref{cor13}, \ref{cor23} and \ref{corLI1}, and combining the resultant inequalities, we get the result of Dragomir\cite{Drag}. Since in some cases the diamond-$\alpha$ derivative is a particular case of the symmetric derivative on time scales, it would be interesting to see, in further work, if our results are also valid for symmetric calculus on time scales. For more on the symmetric calculus, see \cite{Brito1,Brito2}.
\section*{Acknowledgements}
Many thanks to the three anonymous referees for meticulously checking the details and providing helpful comments that improved this paper.


\bibliographystyle{plain}

\label{lastpage}
\end{document}